\begin{document}
\bibliographystyle{plain}
\newfont{\teneufm}{eufm10}
\newfont{\seveneufm}{eufm7}
\newfont{\fiveeufm}{eufm5}
%
%
\newfam\eufmfam
              \textfont\eufmfam=\teneufm \scriptfont\eufmfam=\seveneufm
              \scriptscriptfont\eufmfam=\fiveeufm
\def\bbbr{{\rm I\!R}}
\def\bbbm{{\rm I\!M}}
\def\bbbn{{\rm I\!N}}
\def\bbbf{{\rm I\!F}}
\def\bbbh{{\rm I\!H}}
\def\bbbk{{\rm I\!K}}
\def\bbbp{{\rm I\!P}}
\def\bbbone{{\mathchoice {\rm 1\mskip-4mu l} {\rm 1\mskip-4mu l}
{\rm 1\mskip-4.5mu l} {\rm 1\mskip-5mu l}}}
\def\bbbc{{\mathchoice {\setbox0=\hbox{$\displaystyle\rm C$}\hbox{\hbox
to0pt{\kern0.4\wd0\vrule height0.9\ht0\hss}\box0}}
{\setbox0=\hbox{$\textstyle\rm C$}\hbox{\hbox
to0pt{\kern0.4\wd0\vrule height0.9\ht0\hss}\box0}}
{\setbox0=\hbox{$\scriptstyle\rm C$}\hbox{\hbox
to0pt{\kern0.4\wd0\vrule height0.9\ht0\hss}\box0}}
{\setbox0=\hbox{$\scriptscriptstyle\rm C$}\hbox{\hbox
to0pt{\kern0.4\wd0\vrule height0.9\ht0\hss}\box0}}}}
\def\bbbq{{\mathchoice {\setbox0=\hbox{$\displaystyle\rm
Q$}\hbox{\raise
0.15\ht0\hbox to0pt{\kern0.4\wd0\vrule height0.8\ht0\hss}\box0}}
{\setbox0=\hbox{$\textstyle\rm Q$}\hbox{\raise
0.15\ht0\hbox to0pt{\kern0.4\wd0\vrule height0.8\ht0\hss}\box0}}
{\setbox0=\hbox{$\scriptstyle\rm Q$}\hbox{\raise
0.15\ht0\hbox to0pt{\kern0.4\wd0\vrule height0.7\ht0\hss}\box0}}
{\setbox0=\hbox{$\scriptscriptstyle\rm Q$}\hbox{\raise
0.15\ht0\hbox to0pt{\kern0.4\wd0\vrule height0.7\ht0\hss}\box0}}}}
\def\bbbt{{\mathchoice {\setbox0=\hbox{$\displaystyle\rm
T$}\hbox{\hbox to0pt{\kern0.3\wd0\vrule height0.9\ht0\hss}\box0}}
{\setbox0=\hbox{$\textstyle\rm T$}\hbox{\hbox
to0pt{\kern0.3\wd0\vrule height0.9\ht0\hss}\box0}}
{\setbox0=\hbox{$\scriptstyle\rm T$}\hbox{\hbox
to0pt{\kern0.3\wd0\vrule height0.9\ht0\hss}\box0}}
{\setbox0=\hbox{$\scriptscriptstyle\rm T$}\hbox{\hbox
to0pt{\kern0.3\wd0\vrule height0.9\ht0\hss}\box0}}}}
\def\bbbs{{\mathchoice
{\setbox0=\hbox{$\displaystyle     \rm S$}\hbox{\raise0.5\ht0\hbox
to0pt{\kern0.35\wd0\vrule height0.45\ht0\hss}\hbox
to0pt{\kern0.55\wd0\vrule height0.5\ht0\hss}\box0}}
{\setbox0=\hbox{$\textstyle        \rm S$}\hbox{\raise0.5\ht0\hbox
to0pt{\kern0.35\wd0\vrule height0.45\ht0\hss}\hbox
to0pt{\kern0.55\wd0\vrule height0.5\ht0\hss}\box0}}
{\setbox0=\hbox{$\scriptstyle      \rm S$}\hbox{\raise0.5\ht0\hbox
to0pt{\kern0.35\wd0\vrule height0.45\ht0\hss}\raise0.05\ht0\hbox
to0pt{\kern0.5\wd0\vrule height0.45\ht0\hss}\box0}}
{\setbox0=\hbox{$\scriptscriptstyle\rm S$}\hbox{\raise0.5\ht0\hbox
to0pt{\kern0.4\wd0\vrule height0.45\ht0\hss}\raise0.05\ht0\hbox
to0pt{\kern0.55\wd0\vrule height0.45\ht0\hss}\box0}}}}
\def\bbbz{{\mathchoice {\hbox{$\sf\textstyle Z\kern-0.4em Z$}}
{\hbox{$\sf\textstyle Z\kern-0.4em Z$}}
{\hbox{$\sf\scriptstyle Z\kern-0.3em Z$}}
{\hbox{$\sf\scriptscriptstyle Z\kern-0.2em Z$}}}}
\def\ts{\thinspace}

\newtheorem{theorem}{Theorem}
\newtheorem{corollary}[theorem]{Corollary}
\newtheorem{lemma}[theorem]{Lemma}
\newtheorem{claim}[theorem]{Claim}
\newtheorem{cor}[theorem]{Corollary}
\newtheorem{prop}[theorem]{Proposition}
\newtheorem{definition}[theorem]{Definition}
\newtheorem{remark}[theorem]{Remark}
\newtheorem{question}[theorem]{Open Question}


\def\qed{\ifmmode
\squareforqed\else{\unskip\nobreak\hfil
\penalty50\hskip1em\null\nobreak\hfil\squareforqed
\parfillskip=0pt\finalhyphendemerits=0\endgraf}\fi}

\def\squareforqed{\hbox{\rlap{$\sqcap$}$\sqcup$}}

\def \A {{\mathbb A}}
\def \C {{\mathbb C}}
\def \F {{\mathbb F}}
\def \L {{\mathbb L}}
\def \K {{\mathbb K}}
\def \Q {{\mathbb Q}}
\def \Z {{\mathbb Z}}
\def\cA{{\mathcal A}}
\def\cB{{\mathcal B}}
\def\cC{{\mathcal C}}
\def\cD{{\mathcal D}}
\def\cE{{\mathcal E}}
\def\cF{{\mathcal F}}
\def\cG{{\mathcal G}}
\def\cH{{\mathcal H}}
\def\cI{{\mathcal I}}
\def\cJ{{\mathcal J}}
\def\cK{{\mathcal K}}
\def\cL{{\mathcal L}}
\def\cM{{\mathcal M}}
\def\cN{{\mathcal N}}
\def\cO{{\mathcal O}}
\def\cP{{\mathcal P}}
\def\cQ{{\mathcal Q}}
\def\cR{{\mathcal R}}
\def\cS{{\mathcal S}}
\def\cT{{\mathcal T}}
\def\cU{{\mathcal U}}
\def\cV{{\mathcal V}}
\def\cW{{\mathcal W}}
\def\cX{{\mathcal X}}
\def\cY{{\mathcal Y}}
\def\cZ{{\mathcal Z}}
\newcommand{\rmod}[1]{\: \mbox{mod}\: #1}

\def\tcN{\cN^\mathbf{c}}
\def\F{\mathbb F}
\def\Tr{\operatorname{Tr}}
\def\mand{\qquad \mbox{and} \qquad}
\renewcommand{\vec}[1]{\mathbf{#1}}
\def\eqref#1{(\ref{#1})}
\newcommand{\ignore}[1]{}
\hyphenation{re-pub-lished}
\parskip 1.5 mm
\def\lln{{\mathrm Lnln}}
\def\Res{\mathrm{Res}\,}
\def\lcm{\mathrm{lcm}\,}
\def\rad{\mathrm{rad}\,}
\def\F{{\bbbf}}
\def\Fp{\F_p}
\def\fp{\Fp^*}
\def\Fq{\F_q}
\def\ff{\F_2}
\def\ffn{\F_{2^n}}
\def\K{{\bbbk}}
\def \Z{{\bbbz}}
\def \N{{\bbbn}}
\def\Q{{\bbbq}}
\def \R{{\bbbr}}
\def \P{{\bbbp}}
\def\Zm{\Z_m}
\def \Um{{\mathcal U}_m}
\def \Bf{\frak B}
\def\Km{\cK_\mu}
\def\va {{\mathbf a}}
\def \vb {{\mathbf b}}
\def \vc {{\mathbf c}}
\def\vx{{\mathbf x}}
\def \vr {{\mathbf r}}
\def \vv {{\mathbf v}}
\def\vu{{\mathbf u}}
\def \vw{{\mathbf w}}
\def \vz {{\mathbfz}}
\def\\{\cr}
\def\({\left(}
\def\){\right)}
\def\fl#1{\left\lfloor#1\right\rfloor}
\def\rf#1{\left\lceil#1\right\rceil}
\def\AST{\mathcal{A}_{\mathrm{ST}}}
\def\AU{\mathcal{A}_{\mathrm{U}}}

\def \ctE {\widetilde \cE}

\newcommand{\floor}[1]{\lfloor {#1} \rfloor}
\newcommand{\commF}[1]{\marginpar{%
\begin{color}{green}
\vskip-\baselineskip 
\raggedright\footnotesize
\itshape\hrule \smallskip F: #1\par\smallskip\hrule\end{color}}}

\newcommand{\commI}[1]{\marginpar{%
\begin{color}{blue}
\vskip-\baselineskip 
\raggedright\footnotesize
\itshape\hrule \smallskip I: #1\par\smallskip\hrule\end{color}}}

\newcommand{\commM}[1]{\marginpar{%
\begin{color}{magenta}
\vskip-\baselineskip 
\raggedright\footnotesize
\itshape\hrule \smallskip M: #1\par\smallskip\hrule\end{color}}}

\def\rem{{\mathrm{\,rem\,}}}
\def\dist {{\mathrm{\,dist\,}}}
\def\etal{{\it et al.}}
\def\ie{{\it i.e. }}
\def\veps{{\varepsilon}}
\def\eps{{\eta}}
\def\ind#1{{\mathrm {ind}}\,#1}
               \def \MSB{{\mathrm{MSB}}}
\newcommand{\abs}[1]{\left| #1 \right|}

\title[The $r$th moment of the divisor function]{The $r$th moment of the divisor function: an elementary approach}

\author{Florian~Luca}
\address{School of Mathematics, University of the Witwatersrand, Private Bag X3, Wits 2050, South Africa and Department of Mathematics, Faculty of Sciences, University of Ostrava, 30 dubna 22, 701 03 Ostrava 1, Czech Republic}
\email{florian.luca@wits.ac.za}

\author{L\'aszl\'o T\'oth}
\address{Department of Mathematics, University of P\'ecs,
 Ifj\'us\'ag \'utja 6, H-7624, P\'ecs, Hungary}
\email{ltoth@gamma.ttk.pte.hu}


\pagenumbering{arabic}

\maketitle

\centerline{Journal of Integer Sequences {\bf 20} (2017), Article 17.7.4}
\medskip

\begin{abstract}
Let $\tau(n)$ be the number of divisors of $n$. We give an elementary proof of the fact that
$$
\sum_{n\le x} \tau(n)^r =xC_{r} (\log x)^{2^r-1}+O(x(\log x)^{2^r-2}),
$$
for any integer $r\ge 2$. Here,
$$
C_{r}=\frac{1}{(2^r-1)!} \prod_{p\ge 2}\left( \left(1-\frac{1}{p}\right)^{2^r} \left(\sum_{\alpha\ge 0} \frac{(\alpha+1)^r}{p^{\alpha}}\right)\right).
$$
\end{abstract}

\section{Introduction}

Let $\tau(n)$ be the number of divisors of $n$. Ramanujan \cite{Ram} stated without proof that, given any real number $\varepsilon>0$, the estimate
$$
\sum_{n\le x} \tau(n)^2=x(A(\log x)^3+B  (\log x)^2+C\log x+D)+O(x^{3/5+\varepsilon})
$$
holds with $A=\pi^{-2}$. An elementary proof of the asymptotic formula
$$
\sum_{n\le x} \tau(n)^2\sim Ax(\log x)^3,
$$
as $x\to\infty$, appears in several places (see, for example, \cite[Thm.~7.8]{Nat}). Wilson \cite{Wil} proved Ramanujan's claim and generalized it by showing that for any integer $r\ge 2$ one has
$$
\sum_{n\le x} \tau(n)^r=x(C_{r,1} (\log x)^{2^r-1}+C_{r,2}(\log x)^{2^r-2}+\cdots+C_{r,2^r})+O(x^{\frac{2^r-1}{2^r+2}+\varepsilon}).
$$
Note that when $r=2$, Wilson's error term is better than the one claimed by Ramanujan. We are not aware even of elementary proofs for the asymptotic formula
$$
\sum_{n\le x} \tau(n)^r \sim C_{r}  x(\log x)^{2^r-1}
$$
as $x\to\infty$ for any $r\ge 2$. In this note, we give an elementary proof of the following more general result.

\begin{theorem}
\label{thm:1}
Let $k$ be a positive integer and $f(n)$ be a multiplicative function which on  prime powers $p^{\alpha}$ satisfies
$$
f(p)=k\quad {\text{\rm and}}\quad f(p^{\alpha})=\alpha^{O(1)}\quad {\text{for~all~primes}}~p~{\text{and~integers}}~ \alpha\ge 2,
$$
where the constant implied by the above $O$ is uniform in $p$.
Then
$$
\sum_{n\le x} f(n)=x C_{f} (\log x)^{k-1}+O(x(\log x)^{k-2})
$$
where
$$
C_{f}=\frac{1}{(k-1)!} \left(\prod_{p\ge 2} \left(1-\frac{1}{p}\right)^k \left(\sum_{\alpha\ge 0} \frac{f(p^{\alpha})}{p^{\alpha}}\right)\right).
$$
\end{theorem}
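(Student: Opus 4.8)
The plan is to argue by induction on $k$, at each step peeling one factor of $\zeta$ off the Dirichlet series. Throughout, $\mathbf 1$ denotes the constant function $1$ and $\mu$ the M\"obius function.

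\emph{Base case $k=1$.} Set $h=\mu*f$, so $f=\mathbf 1*h$, i.e.\ $f(n)=\sum_{d\mid n}h(d)$. From $h(p^\alpha)=f(p^\alpha)-f(p^{\alpha-1})$ one reads off $h(p)=0$ and $h(p^\alpha)=\alpha^{O(1)}$ with the implied constant still uniform in $p$; the vanishing of $h(p)$ forces the local factor of $\sum_n|h(n)|n^{-\sigma}$ at $p$ to be $1+\sum_{\alpha\ge 2}|h(p^\alpha)|p^{-\alpha\sigma}=1+O(p^{-2\sigma})$ uniformly in $p$, so $\sum_n|h(n)|n^{-3/4}$ converges. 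Hence $\sum_{d\le x}|h(d)|=O(x^{3/4})$, and
$$\sum_{n\le x}f(n)=\sum_{d\le x}h(d)\Big\lfloor\frac xd\Big\rfloor=x\sum_{d\le x}\frac{h(d)}{d}+O\Big(\sum_{d\le x}|h(d)|\Big)=C_f\,x+O(x^{3/4}),$$
where $C_f=\sum_{d\ge 1}h(d)/d=\prod_p(1-1/p)\sum_{\alpha\ge 0}f(p^\alpha)p^{-\alpha}$, which is the claimed constant for $k=1$; note $x^{3/4}=O(x/\log x)$.

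\emph{Inductive step ($k\ge 2$).} Put $g=\mu*f$, so $f=\mathbf 1*g$; then $g$ is multiplicative with $g(p)=f(p)-1=k-1$ and $g(p^\alpha)=f(p^\alpha)-f(p^{\alpha-1})=\alpha^{O(1)}$ uniformly in $p$, and likewise $|g|$ is multiplicative with $|g|(p)=k-1$ and $|g|(p^\alpha)=\alpha^{O(1)}$ uniformly. Thus $g$ and $|g|$ satisfy the hypotheses with $k$ replaced by $k-1$, and the inductive hypothesis gives $\sum_{d\le x}g(d)=C_g\,x(\log x)^{k-2}+O(x(\log x)^{k-3})$ and $\sum_{d\le x}|g(d)|=O(x(\log x)^{k-2})$ (for $k=2$ the first error term being the sharper $O(x^{3/4})$ from the base case). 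Abel summation then gives $\sum_{d\le x}g(d)/d=\frac{C_g}{k-1}(\log x)^{k-1}+O((\log x)^{k-2})$, whence
$$\sum_{n\le x}f(n)=\sum_{d\le x}g(d)\Big\lfloor\frac xd\Big\rfloor=x\sum_{d\le x}\frac{g(d)}{d}-\sum_{d\le x}g(d)\Big\{\frac xd\Big\}=\frac{C_g}{k-1}\,x(\log x)^{k-1}+O(x(\log x)^{k-2}).$$
Since the Dirichlet series of $g$ is that of $f$ divided by $\zeta$, comparison of Euler factors gives $\sum_{\alpha\ge 0}g(p^\alpha)p^{-\alpha}=(1-1/p)\sum_{\alpha\ge 0}f(p^\alpha)p^{-\alpha}$, hence $C_g/(k-1)=C_f$, closing the induction.

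The point I expect to require the most care is the verification that the derived functions $g,|g|$ (and $h$ in the base case) again meet the hypotheses \emph{with the exponent $O(1)$ uniform in $p$}: this uniformity is exactly what makes $\sum_n|h(n)|n^{-\sigma}<\infty$ for $\sigma>1/2$, and is what keeps the error terms from deteriorating along the induction. A secondary nuisance is that the first step, $k=2$, needs the base case recorded with a power-saving error $O(x^{1-\delta})$ rather than $O(x/\log x)$, so that the Abel summation leaves a remainder $O(1)$ and not $O(\log\log x)$; for $k\ge 3$ the errors propagate routinely, via $\int_2^x(\log t)^j\,dt/t=(\log x)^{j+1}/(j+1)+O(1)$.
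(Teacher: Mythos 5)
Your proof is correct, and it takes a genuinely different route from the paper's. Both arguments rest on the same algebraic engine --- convolving with $\mu$ lowers the value at primes from $k$ to $k-1$, and after enough steps one lands on a function supported (essentially) on squarefull numbers whose Dirichlet series converges for $\Re(s)>1/2$ --- but the analytic scaffolding is organized quite differently. The paper descends all the way to $f_k=\mu^{*k}*f$ at once and then climbs back up through the \emph{harmonic} sums $F_j(x)=\sum_{n\le x}f_j(n)/n$, using $\sum_{m\le t}1/m=\log t+\gamma+O(1/t)$ at each stage; this forces it to control weighted sums $\sum_{d\le x}\frac{f_j(d)}{d}\bigl(\log(x/d)\bigr)^k$, which is the entire content of its technical Lemma~\ref{lem:1}, and a final Abel summation converts $\sum f(n)/n$ back to $\sum f(n)$. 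You instead induct on $k$ one M\"obius factor at a time and stay with the \emph{unweighted} partial sums throughout, needing only $\lfloor x/d\rfloor=x/d+O(1)$, one Abel summation per step, and the bound $\sum_{d\le x}|g(d)|=O(x(\log x)^{k-2})$ obtained by applying the induction hypothesis to $|g|$. This bypasses Lemma~\ref{lem:1} and the $\gamma$-bookkeeping entirely, at the cost of having to carry the companion estimate for $|g|$ through the induction; you also correctly identify the one delicate point, namely that the $k=1$ base case must be recorded with a power-saving error $O(x^{3/4})$ (which your Euler-product bound on $\sum|h(n)|n^{-3/4}$ delivers, playing the role of the paper's squarefull-number count) so that the first Abel summation leaves an $O(1)$ rather than $O(\log\log x)$ remainder. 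The constant matches via $\sum_{\alpha\ge 0}g(p^{\alpha})p^{-\alpha}=(1-1/p)\sum_{\alpha\ge 0}f(p^{\alpha})p^{-\alpha}$ and $C_g/(k-1)=C_f$, exactly as in the paper's recursion $A_{j+1}=A_j/(j+1)$. In short: same skeleton, leaner musculature; your version is, if anything, the more elementary of the two.
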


In the case $f(n)=\tau(n)^r$ for integer $r\ge 1$, Theorem \ref{thm:1} applies with $k=2^r$.

The only facts that we use are Abel's summation formula, the M\"obius inversion formula, the elementary estimate
\begin{equation}
\label{eq:sum1overn}
\sum_{n\le t} \frac{1}{n}=\log t+\gamma+O(1/t)
\end{equation}
valid for all real $t\ge 1$,  and the fact that the counting function of the {\it squarefull} numbers $s\le t$ is $O(t^{1/2})$, where $s$ is squarefull if and only if $p^2\mid s$ for all prime factors $p$ of $s$, all provable by elementary means.

\section{A lemma}

\begin{lemma}
\label{lem:1}
Assume that $r$ is a positive integer and $f(n)$ is some arithmetic function such that
\begin{equation}
\label{eq:xtominus1/2}
\sum_{n\le x} f(n)=\sum_{j=0}^r c_j (\log x)^j+O(x^{-1/2+o(1)}),
\end{equation}
for some constants $c_j$, $j=0,\ldots,r$. Then
\begin{equation}
\label{eq:logxovern}
\sum_{n\le x} f(n)(\log(x/n))^k=\sum_{\ell=0}^{k+r} C_{\ell }(\log x)^{\ell}+O(x^{-1/2+o(1)}),
\end{equation}
holds for all positive integers $k$ with some constants $C_0,\ldots,C_{k+r}$. Here, if $\ell\in \{k,k+1,\ldots,k+r\}$, then
\begin{equation}
\label{eq:Ckj}
C_{\ell}:=c_{\ell-k} \left(1+(\ell-k)\sum_{i=1}^k \frac{(-1)^i}{\ell-k+i}\binom{k}{i}\right).
\end{equation}
Furthermore, if $r\ge t\ge 1$ are positive integers and
\begin{equation}
\label{eq:errorlog}
\sum_{n\le x} f(n)=\sum_{j=t}^r c_j (\log x)^j+O((\log x)^{t-1}),
\end{equation}
then
\begin{equation}
\label{eq:logxovern2}
\sum_{n\le x} f(n) (\log(x/n))^k=\sum_{j=k+t}^{k+r} C_j (\log x)^j+O((\log x)^{t+k-1}).
\end{equation}

\end{lemma}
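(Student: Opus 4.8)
The plan is to reduce the weighted sum to unweighted ones and then apply Abel summation against powers of $\log$. Expanding $(\log(x/n))^k=\sum_{i=0}^k\binom{k}{i}(-1)^i(\log x)^{k-i}(\log n)^i$ gives
\[
\sum_{n\le x}f(n)(\log(x/n))^k=\sum_{i=0}^k(-1)^i\binom{k}{i}(\log x)^{k-i}\,T_i(x),\qquad T_i(x):=\sum_{n\le x}f(n)(\log n)^i,
\]
so it is enough to obtain an asymptotic expansion for each $T_i(x)$. Writing $A(x)=\sum_{n\le x}f(n)=T_0(x)$, Abel's summation formula with the weight $u\mapsto(\log u)^i$ yields, for $i\ge 1$,
\[
T_i(x)=A(x)(\log x)^i-i\int_1^x\frac{A(u)(\log u)^{i-1}}{u}\,du .
\]

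To prove \eqref{eq:logxovern}, I would insert hypothesis \eqref{eq:xtominus1/2}, that is, $A(u)=\sum_{j=0}^r c_j(\log u)^j+E(u)$ with $|E(u)|\ll_\varepsilon u^{-1/2+\varepsilon}$ for every $\varepsilon>0$, into this identity. The polynomial part contributes $\int_1^x(\log u)^{i+j-1}\,du/u=(\log x)^{i+j}/(i+j)$, so the main term of $T_i(x)$ is $\sum_{j=0}^r\frac{jc_j}{i+j}(\log x)^{i+j}$, while the boundary term $E(x)(\log x)^i$ is $O(x^{-1/2+o(1)})$. The one point needing care is the integral $\int_1^x E(u)(\log u)^{i-1}\,du/u$: I would split it as $\int_1^\infty-\int_x^\infty$, note that $\int_1^\infty$ converges to a constant $\kappa_i$ (the integrand is $\ll_\varepsilon u^{-3/2+\varepsilon}$) and that $\int_x^\infty=O(x^{-1/2+o(1)})$. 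Thus $T_i(x)=\sum_{j=0}^r\frac{jc_j}{i+j}(\log x)^{i+j}-i\kappa_i+O(x^{-1/2+o(1)})$ for $i\ge1$, and $T_0(x)=\sum_{j=0}^r c_j(\log x)^j+O(x^{-1/2+o(1)})$. Substituting these into the displayed binomial identity and collecting powers of $\log x$: the coefficient of $(\log x)^{k+j}$, for $0\le j\le r$, comes out to $c_j\bigl(1+j\sum_{i=1}^k\frac{(-1)^i}{i+j}\binom{k}{i}\bigr)$, which is exactly \eqref{eq:Ckj} after setting $\ell=k+j$; the constants $-i\kappa_i$ (multiplied by $(\log x)^{k-i}$) produce the lower coefficients $C_0,\dots,C_{k-1}$, for which no closed form is asserted; and every $O(x^{-1/2+o(1)})$ term survives multiplication by a power of $\log x$.

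For the second assertion I would rerun the computation starting from \eqref{eq:errorlog}, now with $E(u)=O((\log u)^{t-1})$ (bounded near $u=1$). The main term of $T_i(x)$ is again $\sum_{j=t}^r\frac{jc_j}{i+j}(\log x)^{i+j}$, and the error contributions are $E(x)(\log x)^i=O((\log x)^{i+t-1})$ together with $\int_1^x E(u)(\log u)^{i-1}\,du/u=O(1)+O\bigl(\int_2^x(\log u)^{i+t-2}\,du/u\bigr)=O((\log x)^{i+t-1})$; note that this time no convergent constant can be split off, which is exactly why the resulting error has size $(\log x)^{t+k-1}$ and no smaller. Plugging into the binomial identity, the coefficient of $(\log x)^{k+j}$ is the same $C_{k+j}$ as above, there are no surviving main terms of order below $(\log x)^{k+t}$, and re-indexing $\ell=k+j$ yields \eqref{eq:logxovern2}.

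The Abel-summation bookkeeping and the elementary evaluation $\int_1^x(\log u)^m\,du/u=(\log x)^{m+1}/(m+1)$ are routine; the only genuinely delicate step is isolating the convergent integral $\int_1^\infty E(u)(\log u)^{i-1}\,du/u$ in the first part, since it is precisely this constant that keeps the error at $O(x^{-1/2+o(1)})$ while populating the lower-order coefficients $C_0,\dots,C_{k-1}$. If desired, the closed form $C_{k+j}=\frac{k!\,j!}{(k+j)!}\,c_j$ can be extracted from the partial-fractions identity $\sum_{i=0}^k(-1)^i\binom{k}{i}/(j+i)=k!\,(j-1)!/(j+k)!$, but this is not needed here.
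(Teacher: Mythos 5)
Your proposal is correct and follows essentially the same route as the paper: binomial expansion of $(\log(x/n))^k$, Abel summation for $\sum_{n\le x}f(n)(\log n)^i$, termwise integration of the polynomial part, and splitting the error integral as $\int_1^\infty-\int_x^\infty$ to isolate the convergent constants that feed the coefficients $C_0,\dots,C_{k-1}$. The closed form $C_{k+j}=\frac{k!\,j!}{(k+j)!}c_j$ is a nice extra not stated in the paper, but otherwise the two arguments coincide.
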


\begin{proof}
We show how to deduce \eqref{eq:logxovern} out of \eqref{eq:xtominus1/2} with the leading coefficients given by \eqref{eq:Ckj}. Let
$$
A(x)=\sum_{n\le x} f(n).
$$
Then
$$
A(x)=\sum_{j=0}^r c_j(\log x)^j+R(x),
$$
where $|R(x)|=x^{-1/2+o(1)}$ as $x\to\infty$.
Let $i\ge 1$. Put
$$
B_i(x):=\sum_{n\le x} f(n)(\log n)^i.
$$
Then, by the Abel summation formula and by interchanging the order between the summation and the integration, we get
\begin{eqnarray*}
\label{eq:Bj}
B_i(x) & = & A(x)(\log x)^i-i\int_1^x A(t) \left(\frac{(\log t)^{i-1}}{t} \right)dt\nonumber\\
& = & \sum_{j=0}^r \left( c_j(\log x)^{j+i}-i\int_1^x \left(\frac{c_j(\log t)^{j+i-1}}{t}\right) dt \right) \\
& - & i\int_1^x \frac{(\log t)^{i-1} R(t)}{t} dt +  R(x)(\log x)^i\\
& = & \sum_{j=0}^r \left(c_j(\log x)^{j+i}-\frac{c_j i}{j+i} (\log t)^{j+i}\Big|_1^x\right)+\\
& - & i\int_1^{\infty} \frac{(\log t)^{i-1} R(t)}{t} dt+i\int_x^{\infty} \frac{(\log t)^{i-1} R(t)}{t} dt +R(x) (\log x)^i \\
& = & \sum_{j=0}^r \frac{c_j j}{j+i} (\log x)^{j+i}+D_i+O(x^{-1/2+o(1)}),\nonumber\\
\end{eqnarray*}
where
$$
D_i:=-i\int_1^{\infty} \frac{(\log t)^{i-1} R(t)}{t} dt
$$
In the above, we used the fact that $|R(t)|\le t^{-1/2+o(1)}$ as $t\to\infty$ to deduce that the above integral converges and that its tail from $x$ to infinity as well as the other errors are $O(x^{-1/2+o(1)})$ as $x\to\infty$.
Using the binomial formula and the above arguments, we have
\begin{eqnarray*}
C_k(x) & := & \sum_{n\le x} f(n)(\log (x/n))^k \\
& = & \sum_{i=0}^k (-1)^i \binom{k}{i} (\log x)^{k-i} \sum_{n\le x} f(n) (\log n)^i\\
& = & \sum_{n\le x} f(n)+ \sum_{i=1}^k (-1)^{i} \binom{k}{i} (\log x)^{k-i}B_i(x)\\
& = & \sum_{\ell=0}^{k+r} C_{\ell} (\log x)^{\ell}+O(x^{-1/2+o(1)}),
\end{eqnarray*}
where $C_{\ell}$ are given by formula \eqref{eq:Ckj} for $\ell\ge k$. For $\ell=1,\ldots,k-1$, the coefficient $C_{\ell}$ involves the expression $D_{\ell}$. The deduction of \eqref{eq:logxovern2} out of \eqref{eq:errorlog} is immediate by similar arguments.
\end{proof}

\section{The proof of Theorem~\ref{thm:1}}
 Let $f_0(n):=f(n)$. Recursively define $f_j(n )$ such that
 $$
 f_{j-1}(n)=\sum_{d\mid n} f_j(d),\quad j=1,2,\ldots.
 $$
 By M\"obius inversion,
 $$
 f_j(n)=\sum_{d\mid n} \mu(d) f_{j-1}(n/d).
 $$
 On primes
 $$
 f_j(p)=f_{j-1}(p)-1,\quad j=1,2,\ldots.
 $$
 Since $f_0(p)=k$, we get that $f_j(p)=k-j$. In particular, $f_k(p)=0$. Further, for $\alpha\ge 2$, we have that
 $$
 f_j(p^{\alpha})=f_{j-1}(p^{\alpha})-f_{j-1}(p^{\alpha-1}).
 $$
 Since $f_0(p^{\alpha})=\alpha^{O(1)}$ it follows that $f_j(p^{\alpha})=\alpha^{O(1)}$ for all $j\ge 2$. The constant in $O(1)$ might depend on $j$. Further,
 $$
 \sum_{\alpha\ge 0} \frac{f_j(p^{\alpha})}{p^{\alpha}}=\left(1-\frac{1}{p}\right)\sum_{\alpha\ge 0} \frac{f_{j-1}(p^{\alpha})}{p^{\alpha}},\quad j=1,2,\ldots,
 $$
 therefore
 $$
 \sum_{\alpha\ge 0} \frac{f_j(p^{\alpha})}{p^{\alpha}}=\left(1-\frac{1}{p}\right)^j \sum_{\alpha\ge 0} \frac{f(p^{\alpha})}{p^{\alpha}},\quad j=0,1,\ldots
 $$
 Put
 $$
 E_j:=\prod_{p\ge 2} \left(\sum_{\alpha\ge 0} \frac{f_j(p^{\alpha})}{p^{\alpha}}\right)=\prod_{p\ge 2} \left(\left(1-\frac{1}{p}\right)^j \sum_{\alpha\ge 0} \frac{f(p^{\alpha})}{p^{\alpha}}\right).
 $$
Fix $j\ge 1$. Then
 $$
F_{j-1}(x):= \sum_{n\le x} \frac{f_{j-1}(n)}{n}=\sum_{n\le x} \frac{1}{n}\sum_{d\mid n} f_{j}(d)=\sum_{d\le x} f_{j}(d)\sum_{\substack{n\le x\\ d\mid n}} \frac{1}{n}.
 $$
 In the inner sum, we write an $n\le x$ which is a multiple of $d$ as $n=dm$ for some integer $m\le x$. We get
 \begin{eqnarray}
 \label{eq:j-1}
F_{j-1}(x) & = & \sum_{d\le x} \frac{f_j(d)}{d} \sum_{m\le x/d} \frac{1}{m}=\sum_{d\le x} \frac{f_j(d)}{d}\left(\log(x/d)+\gamma+O(d/x)\right)\nonumber\\
& = & \sum_{d\le x} \frac{f_j(d)}{d} \log(x/d)+\gamma\sum_{d\le x} \frac{f_j(d)}{d}+O\left(\frac{1}{x} \sum_{d\le x} |f_j(d)|\right)
\end{eqnarray}
for $j=1,2,\ldots$. When $j=k$, since $f_k(p)=0$, it follows that $f_k(d)=0$ if $d$ is not squarefull. Thus, when $j=k$ in the right--hand side of \eqref{eq:j-1}, we have
$$
\sum_{d\le x} \frac{f_k(d)}{d} \log(x/d)+\gamma \sum_{d\le x} \frac{f_k(d)}{d}+O\left(\frac{1}{x} \sum_{d\le x} |f_k(d)|\right).
$$
Note that
\begin{eqnarray}
\label{eq:E}
\sum_{d\le x} \frac{f_k(d)}{d} & = & \sum_{d\ge 1} \frac{f_k(d)}{d} +O\left(\sum_{d>x} \frac{|f_k(d)|}{d}\right)=E_k+O\left(\sum_{\substack{d\ge x\\ d~{\text{\rm squarefull}}}} \frac{1}{d^{1+o(1)}}\right)\nonumber\\
& = & E_k+O(x^{-1/2+o(1)}),
\end{eqnarray}
where for the error term we used the fact that $|f_k(d)|=|\tau(d)|^{O(1)}=d^{o(1)}$ as $d\to\infty$ and the Abel summation formula to conclude that
$$
\sum_{\substack{d>x\\ d~{\text{\rm squarefull}}}} \frac{1}{d^{1+o(1)}}\le x^{-1/2+o(1)}\quad {\text{\rm as}}\quad x\to\infty.
$$
Further, we have
\begin{eqnarray}
\label{eq:F}
\sum_{d\le x} \frac{f_k(d)}{d}(-\log d+\gamma) & = & \sum_{d\ge 1} \frac{f_k(d)(-\log d+\gamma)}{d}+O\left(\sum_{\substack{d>x\\ d~{\text{\rm squarefull}}}} \frac{|f_k(d)|\log d}{d}\right)\nonumber\\
& : = & F_k+O(x^{-1/2+o(1)})
\end{eqnarray}
as $x\to\infty$, by a similar argument since $|f_k(d)|\log d\le d^{o(1)}$ as $d\to \infty$. Finally
\begin{equation}
\label{eq:G}
\sum_{d\le x} |f_k(d)|\le x^{1/2+o(1)},
\end{equation}
again since $f_k(d)=0$ if $d$ is not squarefull. Collecting \eqref{eq:E}, \eqref{eq:F} and \eqref{eq:G} and putting them into \eqref{eq:j-1} with $j=k$, we get
$$
F_{k-1}(x)=\sum_{n\le x} \frac{f_{k-1}(n)}{n}=E_k \log x+F_k+O(x^{-1/2+o(1)}).
$$
In a similar way,
$$
G_{k-1}(x):=\sum_{n\le x} \frac{|f_{k-1}(n)|}{n}=E_k' \log x+F_k'+O(x^{-1/2+o(1)}).
$$
for some (maybe different) constants $E_k'$ and $F_k'$.  We now apply Lemma \ref{lem:1} in order to find recursively $F_{k-2}(x), F_{k-3}(x),~\ldots,~F_0(x)$. We claim, by induction on $j$, that
\begin{equation}
\label{eq:j}
F_{k-j}(x)=A_j (\log x)^{j}+B_j(\log x)^{j-1}+O((\log x)^{j-2})
\end{equation}
for $j=2,\ldots,k$. At $j=1$, this is so with $A_1=E_k$, $B_1=F_k$ and the error term is better, namely $O(x^{-1/2+o(1)})$. In order to realize the induction step from $j=1$ to $j=2$, we use
the first part of Lemma 1 with $r=1$, whereas for the induction step from $j\ge 2$ to $j+1$ we use the second part of Lemma \ref{lem:1} with $r=j$ and $t=j-1$. Assuming that \eqref{eq:j} holds for $j\ge 1$, we have, by \eqref{eq:j-1},
\begin{eqnarray*}
F_{k-j-1}(x) & = & \sum_{d\le x} \frac{f_{k-j-1}(d)}{d}=\sum_{d\le x} \frac{f_{k-j}(d)}{d} \log(x/d)+\gamma\sum_{d\le x} \frac{f_{k-j}(d)}{d}\\
& + & O\left(\frac{1}{x} \sum_{d\le x} |f_{k-j}(d)|\right).
\end{eqnarray*}
By Lemma \ref{lem:1}, we get that the right hand side is
\begin{eqnarray*}
&& \frac{A_j}{j+1} (\log x)^{j+1}+\left(\frac{B_j}{j}+\gamma A_{j}\right) (\log x)^{j}\\ & + & O\left((\log x)^{j-1}+\frac{1}{x} \sum_{d\le x} |f_{k-j}(d)|\right)\\
&& :=A_{j+1}(\log x)^{j+1}+B_{j+1} (\log x)^{j}+O\left((\log x)^{j-1}+\frac{1}{x} \sum_{d\le x} |f_{k-j}(d)|\right),
\end{eqnarray*}
where
$$
A_{j+1}=\frac{A_j}{j+1},\quad {\text{\rm and}}\quad B_{j+1}=\gamma A_j+\frac{B_j}{j}.
$$
Thus, we note that $A_j=E_k/j!$. It remains to deal with the sum in the error term. But the exact same approach applies to $|f_{k-j}(n)|$. That is $g_0(n)=|f_{k-j}(n)|$ satisfies the same conditions as our initial $f_0(n)$ with $k$ replaced by $k-j$. Thus,
$$
\sum_{d\le x} \frac{|f_{k-j}(d)|}{d}=C_j(\log x)^{j} +D_j (\log x)^{j-1}+O((\log x)^{j-2}),
$$
where for $j=1$, the error term is $O(x^{-1/2+o(1)})$ as $x\to\infty$.
By Abel summation, we get that
\begin{eqnarray*}
\sum_{d\le x} |f_{k-j}(d)| & = & x(C_j(\log x)^j+D_j (\log x)^{j-1}+O((\log x)^{j-2}))\\
& - & \int_1^x (C_j(\log t)^j+D_j (\log t)^{j-1}+O((\log t)^{j-2})) dt\\
& = & O(x(\log x)^{j-1}),
\end{eqnarray*}
which is sufficient for us. This completes the induction procedure and shows that at $j=k$ we have
$$
\sum_{n\le x} \frac{f(n)}{n} =\frac{1}{k!} E_k (\log x)^{k}+B_k (\log x)^{k-1}+O((\log x)^{k-2}).
$$
Abel summation formula once again gives
\begin{eqnarray*}
\sum_{n\le x} f(n) & = & \left(\frac{E_k}{k!} (\log x)^{k}+B_k(\log x)^{k-1}+O((\log x)^{k-2})\right) x\\
& - & \int_1^x \left(\frac{E_k}{k!} (\log t)^{k}+B_k(\log t)^{k-1}+O((\log t)^{k-2})\right) dt\\
& = &
\frac{E_k}{(k-1)!} x(\log x)^{k-1}+O(x(\log x)^{k-2}),
\end{eqnarray*}
which is what we wanted.

\section{Acknowledgments}

We thank the referee for a careful reading of the manuscript. This work
was done when both authors visited the Max Planck Institute of
Mathematics in Bonn, Germany in February 2017. They thank the
Institution for the invitation and support.

\end{document}